\numberwithin{equation}{section}
\newtheorem{theorem}{Theorem}[section]
\newtheorem{proposition}{Proposition}[section]
\newtheorem{lemma}{Lemma}[section]
\newtheorem{definition}{Definition}[section]
\newtheorem{corollary}{Corollary}[section]
\newtheorem{remark}{Remark}[section]
\renewcommand\appendix{\par
    \setcounter{section}{0}
    \setcounter{subsection}{0}
    \gdef\thesection{Appendix \Alph{section}}}
\journal{Elsevier}
\begin{document}
\begin{frontmatter}
\title{Liouville Theorem for Lane-Emden Equation of Baouendi-Grushin operators}
\author{Hua Chen\corref{cor1}}
\ead{chenhua@whu.edu.cn}
\author{Xin Liao}
\ead{xin_liao@whu.edu.cn}

\address{School of Mathematics and Statistics, Wuhan University, Wuhan 430072, China}

\date{\today}
\begin{abstract}
In this paper, we establish a Liouville theorem for solutions to the Lane-Emden equation involving Baouendi-Grushin operators:
$$-(\Delta_{x} u+(\alpha+1)^2|x|^{2 \alpha} \Delta_{y} u) = |u|^{p-1}u,$$  
where $(x,y)\subset \mathbb{R}^N = \mathbb{R}^m \times \mathbb{R}^n$ with $m \geq 1$, $n\geq 1$, and  $\alpha \geq 0$.
We focus on solutions that are stable outside a compact set. Specifically, we prove that for
$p>1$, when $p$ is smaller than the Joseph–Lundgren exponent and differs from the Sobolev exponent, $u=0$ is the  unique $C^2$ solution  stable outside a compact set. This work extends the results obtained by Farina (J. Math. Pures Appl., \textbf{87} (5) (2007), 537–561).
\end{abstract}
\begin{keyword}
Baouendi-Grushin operator \sep Lane-Emden equation \sep stable outside a compact set.
\MSC[2020] 35B33 \sep  35J61 \sep 35J70
\end{keyword}
\end{frontmatter}
\section{Introduction}
In this paper, we explore the following  Lane-Emden equation  of Baouendi-Grushin operators:
\begin{equation}\label{01}
    -\Delta_{\alpha} u(x,y)  = |u|^{p-1}u(x,y),~~ \forall (x,y)\subset \mathbb{R}^N = \mathbb{R}^m \times \mathbb{R}^n.
\end{equation}
Here,  $\alpha\geq 0, p>1$ and $\Delta_{\alpha}:=\Delta_{x} +(\alpha+1)^2|x|^{2 \alpha} \Delta_{y}$ denotes the  Baouendi-Grushin operators (cf. \cite{B1967}, \cite{G1970}).
When $\alpha=0$, \eqref{01}  reduces to the  conventional Lane-Emden equation:
\begin{equation} \label{02}
-\Delta u = |u|^{p-1}u~~ \text{in}~ \mathbb{R}^N.
\end{equation}
For equation \eqref{02}, there exist two critical exponents: the first one is the Sobolev exponent $p_{\text{S}}(N) = \frac{N+2}{N-2}$, and the second one is the Joseph–Lundgren exponent:
\begin{align*}
    p_{\text{JL}}(N) &=
    \begin{cases}
        +\infty, & \text{if }  N\leq 10, \\
        \frac{(N-2)^2-4N+8\sqrt{N-1}}{(N-2)(N-10)}, & \text{if } N\geq 11.
    \end{cases}
\end{align*}

In the pioneering work of Farina \cite{Farina}, a comprehensive classification of solutions for the conventional Lane-Emden equation was provided. Specifically, it was proved that:
\begin{enumerate}
\item For $1<p<p_{\text{JL}}(N)$, the unique $C^2$ stable solution to equation \eqref{02} is $u=0$.
\item For $p\in (1, p_{\text{S}}(N))\cup (p_{\text{S}}(N), p_{\text{JL}}(N))$, the unique $C^2$ solution to equation \eqref{02} that is stable outside a compact set is $u=0$.
\item For $p=p_{\text{S}}(N)$, equation \eqref{02} becomes the Yamabe equation. Due to the Cwikel-Lieb-Rozenblum inequality for the Schrödinger equation, Talent bubbles emerge as $C^2$ solutions with finite Morse index, making them stable outside a compact set.
\item For $N\geq 11$ and $p\geq p_{\text{JL}}(N)$,  standard phase plane analysis reveals that equation \eqref{02} admits positive radial $C^2$ stable solutions. This stability can be deduced from the Hardy inequality.
\end{enumerate}

Later, D\'{a}vila-Dupaigne-Wang-Wei \cite{Wang} provided a monotonicity formula and proved a Liouville theorem for solutions of the biharmonic Lane-Emden equation, both stable and stable outside a compact set. Additionally, Luo-Wei-Zou \cite{Luo} studied the polynomial harmonic case, while D\'{a}vila-Dupaigne-Wei \cite{Dupaigne} and Fazly-Wei \cite{Fazly} addressed the fractional and higher-order fractional cases.

For $n\geq1$ and $\alpha>0,$ the Baouendi-Grushin operator is known to be degenerate elliptic operator with $C^{\alpha}$ coefficients.
Denote $ Q=m+ n(\alpha +1)$ by the ‘‘homogeneous
dimension" of $\mathbb{R}^N$. Using the methods developed in \cite{Farina},  Rahal  \cite{Rahal}  proved that  any    $C^2$ stable  solution of \eqref{01}
is trivial if $p$ is smaller than $p_{\text{JL}}(Q)$.  However, to the best of my knowledge, no results have yet addressed solutions that are stable outside a compact set. Returning to our main problem, we present the following main result:

\begin{theorem}\label{thm1}\hfill
\begin{enumerate}[$(1)$]
  \item If  $p\in (1, p_{\text{S}}(Q))$, $u=0$ is the unique $C^2$ solution to \eqref{01} that is stable outside a compact set.
 \item For $p\in (p_{\text{S}}(Q), p_{\text{JL}}(Q))$, $u=0$ is the unique $C^2$ solution to  \eqref{01} that is stable outside a compact set.
\end{enumerate}
\end{theorem}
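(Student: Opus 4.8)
\section*{Proof proposal}

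The plan is to adapt Farina's method to the anisotropic geometry of $\Delta_\alpha$. I would work with the Grushin gauge $\rho(x,y)=\bigl(|x|^{2(\alpha+1)}+|y|^2\bigr)^{1/(2(\alpha+1))}$, the gauge balls $B_R=\{\rho<R\}$, the horizontal gradient $\nabla_\alpha u=(\nabla_x u,(\alpha+1)|x|^{\alpha}\nabla_y u)$, and the dilation field $Z=x\cdot\nabla_x+(\alpha+1)\,y\cdot\nabla_y$, which generates the scaling $\delta_\lambda(x,y)=(\lambda x,\lambda^{\alpha+1}y)$ (with $Z\rho=\rho$ and $\det D\delta_\lambda=\lambda^{Q}$). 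Recall that $u$ is stable outside a compact set means there is $R_0>0$ with $\int p|u|^{p-1}\varphi^2\le\int|\nabla_\alpha\varphi|^2$ for all $\varphi\in C_c^\infty(\mathbb{R}^N\setminus\overline{B_{R_0}})$. The engine of the argument is the Pohozaev identity obtained by testing \eqref{01} against $Zu$ on $B_R$, which yields
$$\Bigl(\tfrac{Q-2}{2}-\tfrac{Q}{p+1}\Bigr)\int_{B_R}|u|^{p+1}\,dz=\mathcal{B}(R),$$
where $\mathcal{B}(R)$ is a boundary integral on $\partial B_R$ built from $u$, $\nabla_\alpha u$ and $Zu$. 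Since $p\neq p_{\text{S}}(Q)$ the prefactor is nonzero, so the entire task reduces to showing that $\mathcal{B}(R)$ is negligible along a sequence $R_k\to\infty$.

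First I would record the Farina-type stability estimate for cutoffs supported away from $B_{R_0}$: inserting $\varphi=|u|^{(\gamma-1)/2}u\,\eta$ into the stability inequality and combining with \eqref{01} tested against $|u|^{\gamma-1}u\,\eta^2$ gives, for every $\gamma\in[1,\gamma_{\max})$ with $\gamma_{\max}=2p-1+2\sqrt{p(p-1)}$,
$$\int_{\mathbb{R}^N}|u|^{p+\gamma}\eta^2\,dz\le C\int_{\mathbb{R}^N}|u|^{\gamma+1}|\nabla_\alpha\eta|^2\,dz,\qquad \eta\in C_c^\infty(\mathbb{R}^N\setminus\overline{B_{R_0}}).$$
Choosing $\eta=\phi(\rho/R)$ and applying H\"older produces the master annular bound $\int_{B_{2R}\setminus B_R}\bigl(|\nabla_\alpha u|^2+|u|^{p+1}\bigr)\,dz\le CR^{\beta}$ with $\beta=Q-\tfrac{2(p+1)}{p-1}$ (the energy part coming from \eqref{01} tested against $u\eta^2$), and more generally $\int_{B_{2R}\setminus B_R}|u|^{p+\gamma}\,dz\le CR^{\,Q-\frac{2(p+\gamma)}{p-1}}$. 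The hypothesis $p<p_{\text{JL}}(Q)$ is precisely what renders $Q-\tfrac{2(p+\gamma)}{p-1}<0$ for $\gamma$ close to $\gamma_{\max}$; an iteration that absorbs the fixed, bounded contribution of the inner region near $B_{R_0}$ (where $u\in C^2$) then yields $\int_{\mathbb{R}^N}|u|^{p+\gamma_0}\,dz<\infty$ for some $\gamma_0>1$.

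For statement $(1)$, $p\in(1,p_{\text{S}}(Q))$, the exponent $\beta$ is already negative, so the master bound with $\gamma=1$, summed over dyadic annuli, gives the finite-energy property $\int_{\mathbb{R}^N}\bigl(|\nabla_\alpha u|^2+|u|^{p+1}\bigr)\,dz<\infty$. Consequently $\int_0^\infty\bigl(\int_{\partial B_r}(|u|^{p+1}+|\nabla_\alpha u|^2)\,d\sigma\bigr)\,dr<\infty$, which forces $\liminf_{R\to\infty}|\mathcal{B}(R)|=0$ once one accounts for the at-most-linear-in-$R$ weights in $\mathcal{B}$. Passing to the limit in the Pohozaev identity and using $\tfrac{Q-2}{2}-\tfrac{Q}{p+1}\neq0$ gives $\int_{\mathbb{R}^N}|u|^{p+1}=0$, i.e. $u\equiv0$.

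Statement $(2)$, $p\in(p_{\text{S}}(Q),p_{\text{JL}}(Q))$, is the genuinely hard case, since now $\beta>0$, $\int_{\mathbb{R}^N}|u|^{p+1}$ need not be finite, and the boundary terms cannot be killed directly; here I would run a monotonicity-formula and blow-down scheme. Define the scaled energy
$$E(u;\lambda)=\lambda^{\frac{2(p+1)}{p-1}-Q}\int_{B_\lambda}\Bigl(\tfrac12|\nabla_\alpha u|^2-\tfrac1{p+1}|u|^{p+1}\Bigr)dz+\tfrac1{p-1}\lambda^{\frac{2(p+1)}{p-1}-Q-1}\int_{\partial B_\lambda}u^2\,d\sigma,$$
adapted to the Grushin scaling, and prove $\tfrac{d}{d\lambda}E(u;\lambda)\ge c\,\lambda^{\sigma}\int_{\partial B_\lambda}\bigl(Zu+\tfrac{2}{p-1}u\bigr)^2\,d\sigma\ge0$. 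The master annular bound makes $E$ bounded, hence convergent as $\lambda\to\infty$, while $E(u;0^{+})=0$ by the $C^2$ regularity of $u$. Monotonicity then forces the blow-down family $v_\lambda(z)=\lambda^{2/(p-1)}u(\delta_\lambda z)$ to subconverge (in $C^2_{loc}$ away from the characteristic set $\{x=0\}$, via interior estimates for $\Delta_\alpha$) to a solution $v_\infty$ of \eqref{01} that is homogeneous of degree $-2/(p-1)$ and stable on $\mathbb{R}^N\setminus\{0\}$ (the compact set contracts to the origin under blow-down). A classification of homogeneous stable solutions — established through a Hardy-type inequality for $\nabla_\alpha$ with sharp constant $\bigl(\tfrac{Q-2}{2}\bigr)^2$ together with a spectral analysis on the gauge sphere — shows that for $p<p_{\text{JL}}(Q)$ the only such $v_\infty$ is $0$; hence $\lim_{\lambda\to\infty}E(u;\lambda)=0$, so $E\equiv0$, which makes $u$ itself homogeneous of degree $-2/(p-1)$, and smoothness at the origin forces $u\equiv0$. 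The main obstacle is exactly the derivation of this monotonicity formula for the degenerate operator: the anisotropic scaling, the weight $|x|^{2\alpha}$, and the characteristic set $\{x=0\}$ make the integrations by parts and the gauge-sphere boundary terms delicate, and the accompanying sharp classification of homogeneous stable solutions — where $p_{\text{JL}}(Q)$ enters — must be carried out with the Grushin Hardy inequality in place of the Euclidean one.
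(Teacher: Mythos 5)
Your part (1) is essentially the paper's argument: the Farina stability estimate with test function $|u|^{(s-1)/2}u\,\eta$, the annular bound $\int_{B_{2R}\setminus B_R}|u|^{p+s}\,dz\le CR^{Q-\frac{2(p+s)}{p-1}}$, finiteness of $\int(|\nabla_\alpha u|^2+|u|^{p+1})\,dz$ for $p<p_{\text{S}}(Q)$, and the Pohozaev identity with vanishing remainder. (The paper kills the Pohozaev remainder with a cutoff version of the identity and the pointwise bound $|\nabla_\alpha u\cdot\nabla_\alpha\varphi\,Zu|\le C|\nabla_\alpha u|^2$ coming from \eqref{2-14}, rather than a coarea/boundary-integral selection of good radii, but these are equivalent in substance.)

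For part (2) you take a genuinely different route at the crucial final step, and that route has a gap. You propose blow-down along $v_\lambda=\lambda^{2/(p-1)}u(\delta_\lambda\cdot)$, subconvergence to a homogeneous limit $v_\infty$ stable outside the origin, and then a classification of homogeneous stable solutions via a Grushin Hardy inequality and ``spectral analysis on the gauge sphere.'' That classification is exactly where $p_{\text{JL}}(Q)$ would have to enter in your scheme, and it is not available off the shelf: by \eqref{2-11} the operator in polar form carries the factor $|x|^{2\alpha}/\rho^{2\alpha}$, so $\Delta_\alpha$ does not separate into a radial part plus a fixed self-adjoint operator on $\partial B_1$ the way $\Delta$ does, and Farina's spherical-harmonics computation does not transfer. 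You flag this as ``delicate'' but do not supply it, and the compactness of the blow-down family (in $C^2_{loc}$ away from the characteristic set $\{x=0\}$) together with passage of stability to the limit are further unverified steps. The paper avoids all of this: it first proves the pointwise decay $\rho(z)^{2/(p-1)}|u(z)|\to0$ (Lemma~5.1), obtained because $p<p_{\text{JL}}(Q)$ makes $\frac{Q(p-1)}{2}=p+s_1$ admissible in the stability estimate, so $u\in L^{Q(p-1)/2}(\mathbb{R}^N)$, and a scaled Moser iteration on annuli converts the small tail into a sup bound on $u_r$. This decay makes both the boundary term $\int_{\partial B_1}\frac{|x|^{2\alpha}u_r^2}{(p-1)|\nabla\rho|}\,d\sigma$ and the scaled energy tend to $0$ as $r\to\infty$, so $I(r,u)\to0$ at both ends; monotonicity then gives $I\equiv0$ and $\frac{2}{p-1}u+Zu=0$ directly for $u$ itself, and continuity at the origin finishes. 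In short: no blow-down limit ever needs to be classified, because the monotone quantity is shown to vanish identically. To repair your proposal you would either have to prove the homogeneous-stable classification for $\Delta_\alpha$ (a substantial and, to my knowledge, open computation for $\alpha>0$), or replace that step by the decay estimate as the paper does.
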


In the special case where $n=1, m=2k ~(k\in \mathbb{N}^*)$ and $\alpha=1,$ the equation \eqref{01}
 is associated with the Lane-Emden equation of the Heisenberg Laplacian in $\mathbb{H}^k:=\mathbb{R}^{2k} \times \mathbb{R}$.   For  $(x,y)$ and $(x',y')$ in $\mathbb{R}^{2k} \times \mathbb{R}$, the Heisenberg group multiplication operation $\circ$ is defined as:
\[(x,y) \circ (x',y') := (x+x', y+y'+2\sum_{i=1}^{k}(x_{i+k}x_i'-x_{i}x_{i+k}') ). \]
The left-invariant vector fields are given by:
\begin{equation}
X_{i}=\frac{\partial}{\partial x_i}+2x_{i+k}\frac{\partial}{\partial y},~
X_{i+k}=\frac{\partial}{\partial x_{i+k}}-2x_{i}\frac{\partial}{\partial y},~ i=1,\ldots, k.
\end{equation}
The Heisenberg Laplacian is defined by
 $$\Delta_{\mathbb{H}^k} :
=\sum_{i=1}^{2k} X_i^2 =  \Delta_{x} +4|x|^{2} \partial_{yy} +\sum_{i=1}^{k}( 4x_{i+k}\partial_{yx_i}- 4x_{i}\partial_{yx_{i+k}}) .$$
The Lane-Emden equation of the Heisenberg Laplacian  is then formulated as:
\begin{equation}\label{Yamabe}
  -\Delta_{\mathbb{H}^k} u = |u|^{p-1}u,  ~~ \text{in}~ \mathbb{R}^{2k} \times \mathbb{R}. 
  \end{equation}

Recently, Wei-Wu \cite{wuke} constructed   cylindrical singular solutions to equation \eqref{Yamabe}. 
Here, we define a function 
 $u(x,y)$  as cylindrical if it is radially symmetric in the variable $x$. 
It is straightforward to observe that for a cylindrical function $u$, we have $x_{i+k}\partial_{x_i}u- x_{i}\partial_{x_{i+k}}u=0$,  thus  reducing $\Delta_{\mathbb{H}^k}  u= \Delta_{x}u +4|x|^{2} \partial_{yy}u$. 
As a consequence of  Theorem \ref{thm1}, we obtain the following corollary:

\begin{corollary}\label{cor}
 If  $p\in (1, p_{\text{S}}(2k+2)) \cup  (p_{\text{S}}(2k+2), p_{\text{JL}}(2k+2))$ , $u=0$ is the unique $C^2$ cylindrical  solution to \eqref{Yamabe} that is stable outside a compact set.
\end{corollary}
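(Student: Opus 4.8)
The plan is to show that, for cylindrical functions, the Lane--Emden problem for the Heisenberg Laplacian \eqref{Yamabe} is identical---in equation and in the relevant stability notion---to the Baouendi--Grushin problem \eqref{01} with parameters $\alpha=1$, $m=2k$, $n=1$, and then to invoke Theorem \ref{thm1}. First I would record the reduction at the level of the equation: as already observed, when $u$ is cylindrical each angular term $x_{i+k}\partial_{x_i}u - x_i\partial_{x_{i+k}}u$ vanishes pointwise, so $\Delta_{\mathbb{H}^k}u = \Delta_x u + 4|x|^2\partial_{yy}u$, which is exactly $\Delta_\alpha u$ for $\alpha=1$, $n=1$ (here $(\alpha+1)^2=4$, $|x|^{2\alpha}=|x|^2$, $\Delta_y=\partial_{yy}$). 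Hence every cylindrical $C^2$ solution of \eqref{Yamabe} is a $C^2$ solution of \eqref{01}. The homogeneous dimension for these parameters is $Q = m + n(\alpha+1) = 2k+2$, so the exponents $p_{\text{S}}(Q)$ and $p_{\text{JL}}(Q)$ of Theorem \ref{thm1} become $p_{\text{S}}(2k+2)$ and $p_{\text{JL}}(2k+2)$, matching the range in the corollary.

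The crux is to transfer stability. Expanding the horizontal energy gives, for any $\phi$,
$$\sum_{i=1}^{2k}(X_i\phi)^2 = |\nabla_x\phi|^2 + 4|x|^2(\partial_y\phi)^2 + 4(\partial_y\phi)\sum_{i=1}^k\big(x_{i+k}\partial_{x_i}\phi - x_i\partial_{x_{i+k}}\phi\big),$$
so on cylindrical $\phi$ the last sum drops out pointwise and the Heisenberg horizontal energy coincides with the Grushin energy $|\nabla_x\phi|^2 + 4|x|^2(\partial_y\phi)^2$ for $\alpha=1$. Consequently the two quadratic forms $\int(\cdots) - p\int|u|^{p-1}\phi^2$ agree on cylindrical $\phi$. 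Thus if $u$ is stable outside a compact set $K$ for \eqref{Yamabe}, the stability inequality holds in particular for all cylindrical test functions supported off $K$, and therefore the corresponding Grushin stability inequality holds for all such cylindrical competitors.

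Finally I would check that this cylindrical stability is all that Theorem \ref{thm1} requires here. In the Farina-type scheme of \cite{Farina}, the stability inequality is tested against functions of the form (a power of) $u$ multiplied by a cutoff $\eta$ depending only on the homogeneous Grushin norm; since that norm is radial in $x$ and $u$ is cylindrical, every such test function is again cylindrical. Hence the proof of Theorem \ref{thm1}, specialized to the cylindrical solution $u$, only ever evaluates the quadratic form on cylindrical competitors, on which the Heisenberg and Grushin forms agree. Therefore $u$ meets the hypotheses of Theorem \ref{thm1} for \eqref{01} with $Q=2k+2$, and the theorem yields $u\equiv 0$ on the stated range of $p$.

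The main obstacle is precisely this last bookkeeping step. One must verify that the admissible test functions appearing in the proof of Theorem \ref{thm1} can indeed be taken cylindrical when the solution is cylindrical, because on general (non-cylindrical) test functions the two energies differ by the indefinite cross term displayed above, so the Heisenberg and Grushin stability notions are genuinely distinct and a naive transfer would fail.
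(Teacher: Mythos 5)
Your proposal is correct and follows essentially the same route as the paper, which states the corollary as an immediate consequence of Theorem \ref{thm1} after the observation that $\Delta_{\mathbb{H}^k}u=\Delta_xu+4|x|^2\partial_{yy}u$ for cylindrical $u$, so that \eqref{Yamabe} becomes \eqref{01} with $\alpha=1$, $m=2k$, $n=1$ and $Q=2k+2$. Your additional verification that the Heisenberg and Grushin quadratic forms agree on cylindrical test functions, and that the proof of Theorem \ref{thm1} only ever tests stability against cylindrical competitors (powers of $u$ times cutoffs in the gauge $\rho$), is a worthwhile piece of bookkeeping that the paper leaves implicit.
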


When $p=p_{\text{S}}(2k+2)$, equation \eqref{Yamabe} transforms into the CR Yamabe problem. Without restricting to cylindrical solutions, Jerison and Lee \cite{Jerison1989,Jerison1988} proved that,  up to group translations and dilations, a suitable multiple of the function 
\begin{equation}\label{1.5a}
u(x,y)=\frac{1}{((1+|x|^2)^2+y^2)^{\frac{2k}{4}}} 
\end{equation}
is the unique positive entire solution of the CR Yamabe problem.  It is evident that the function 
 $u$  in \eqref{1.5a} is  cylindrical. 
Moreover, if $p=p_{\text{S}}(Q)$, for any $m\geq 1, n\geq 1$, and  $\alpha\geq 0$,  equation \eqref{01} admits a  cylindrical 
solution $u$  with an asymptotic estimate at infinity  (see Dou-Sun-Wang \cite{DSWZ2022} and Chen-Liao-Zhang \cite{clz}). 
  Consequently, the Cwikel-Lieb-Rozenblum inequality (c.f. Levin-Solomyak \cite{Levin}) ensures that $u$ possesses a finite Morse index, making $u$  stable outside a compact set.

Unfortunately, the existence of a nontrivial $C^2$ stable solution (or stable outside a compact set) remains unknown for cases where  $\alpha>0, Q\geq11$ and $p\geq p_{\text{JL}}(Q)$. In fact, when $\alpha>0$, from \eqref{2-11},
 equation \eqref{01} cannot be solved as an ordinary differential equation, and it is natural to consider the cylindrical solutions.  

Our paper is organized as follows:
\begin{enumerate}[$\bullet$]
    \item In the subsequent section, we present essential preliminaries concerning the Baouendi-Grushin operator.
    \item Section 3 and 4 employ the Moser iteration method (c.f. \cite{Farina}) to establish (1) in Theorem \ref{thm1}. Additionally, an asymptotic estimation at infinity is provided in Section 5. 
    \item Section 6 is dedicated to formulating a monotonicity formula, and  we complete the remaining proof in Section 7 by applying the monotonicity formula.
\end{enumerate}

\section{Preliminaries of Baouendi-Grushin operator}
In our notation, a point $z=(x,y)\subset \mathbb{R}^N = \mathbb{R}^m \times \mathbb{R}^n$  is represented as follows:
\[z=(x,y)= (x_1,\ldots,x_m,y_1,\ldots, y_n). \]

The group of non-isotropic dilations $\{\delta_{r}\}_{r>0}$  is defined by
\begin{equation}
  (x,y)\longmapsto \delta_{r}(x,y):=(r x, r^{\alpha+1}y ).
\end{equation}
The infinitesimal generator of the family of dilations is given by the vector field
\[Zu(z)=\lim_{r\to 1} \frac{u(\delta_r(z)) -u(z)}{r-1} =\sum_{1}^{m}x_{i}\partial_{x_i}u+\sum_{1}^{n}(\alpha+1)y_{i}\partial_{y_i}u.\]
Such vector field is characterized by the property: $u(\delta_{r}(\cdot))=r^su(\cdot)$ if and only if $Zu=su$.

As is customary, we denote the homogeneous dimension with respect to $\{\delta_{r}\}_{r>0}$ as $Q=m+ n(\alpha +1)$, 
and use the notation $\nabla_{\alpha} := (\nabla_{x} , (\alpha+1)|x|^{\alpha}\nabla_{y})$ to denote the Grushin gradient.
One can check that
\begin{equation}\label{2.444}
\nabla_{\alpha} ( u( \delta_{r}(\cdot)))=r\nabla_{\alpha} u (\delta_{r}(\cdot)),~\Delta_{\alpha} ( u( \delta_{r}(\cdot)))=r^2 \Delta_{\alpha} u (\delta_{r}(\cdot)).
\end{equation}

It is well known that the following Sobolev-type inequality does hold,
\begin{equation}
  \|u\|^2_{L^{\frac{2Q}{Q-2}}(\mathbb{R}^N)}\leq C\|\nabla_{\alpha}u\|^2_{L^2(\mathbb{R}^N)}~~~~\forall u\in C_0^{1}(\mathbb{R}^N).
\end{equation}

Moreover, set
\begin{equation}\label{eq:heisenberg-norm}
\rho(x,y)=(|x|^{2(\alpha+1)}+|y|^{2})^{\frac{1}{2(\alpha+1)}}.
\end{equation}
The ball and  sphere with respect to $\rho$ centered at the origin with radius $r$ are
\begin{equation}
B_r =\{(x,y) \mid \rho(x,y)<r \},~\partial B_r =\{(x,y) \mid \rho(x,y)=r \}.
\end{equation}

We present a set of useful formulas, primarily derived through direct calculations, also see Garofala \cite{Garofala1993}.

\begin{equation}\label{2-11}
\Delta_{\alpha}  f(\rho)=\frac{|x|^{2\alpha}}{\rho^{2\alpha}}\left(f''(\rho)+\frac{Q-1}{\rho}f'(\rho)\right).
\end{equation}

\begin{equation}\label{2-12}
    \int_{B_R\setminus B_r}f(\rho(z)) \,dz = Q|B_1| \int_{r}^{R}\rho^{Q-1}f(\rho) \,d\rho.
\end{equation}

\begin{equation}\label{2-13}
    [\nabla_{\alpha}, Z]=\nabla_{\alpha}Z-Z\nabla_{\alpha}=\nabla_{\alpha}, \quad \text{div~} Z=Q.
\end{equation}

\begin{equation}\label{2-14}
    \quad |\nabla_{\alpha}\rho|^2=\frac{|x|^{2\alpha}}{\rho^{2\alpha}}, \quad \nabla_{\alpha}\rho\cdot \nabla_{\alpha}u=\frac{|x|^{2\alpha}Zu}{\rho^{1+2\alpha}}.
\end{equation}

We conclude this section with the following Pohozaev identity:

\begin{proposition}
    Suppose $u$ is a $C^2$ solution to $-\Delta_{\alpha} u=f(u)$ on $\mathbb{R}^N$, with $F(u)=\int_{0}^{u}f(s)\,ds$, then for any $\varphi\in C_0^{\infty}(\mathbb{R}^N)$, the Pohozaev identity holds:
    \begin{equation}\label{Pohozaev identity}
        \begin{aligned}
            \int_{\mathbb{R}^N} &\Big{[}\frac{Q-2}{2}|\nabla_{\alpha} u|^2-QF(u) \Big{]}\varphi \,dz \\
            &= \int_{\mathbb{R}^N} \Big{[}F(u)-\frac{1}{2}|\nabla_{\alpha} u|^2\Big{]}Z\varphi \,dz+ \int_{\mathbb{R}^N} \nabla_{\alpha} u\cdot \nabla_{\alpha} \varphi Zu \,dz.
        \end{aligned}
    \end{equation}
\end{proposition}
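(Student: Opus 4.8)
The plan is to test the equation against the dilation field localized by $\varphi$: multiply $-\Delta_{\alpha}u=f(u)$ by $(Zu)\varphi$ and integrate over $\mathbb{R}^N$, which gives
\begin{equation*}
\int_{\mathbb{R}^N} f(u)(Zu)\varphi\,dz = -\int_{\mathbb{R}^N}(\Delta_{\alpha}u)(Zu)\varphi\,dz.
\end{equation*}
Since $\varphi$ has compact support, every integration by parts below is free of boundary terms, so the argument is a purely algebraic rearrangement driven by the commutator relation \eqref{2-13}. I would transform the two sides separately. The single tool I need for the dilation field is the adjoint identity
\begin{equation*}
\int_{\mathbb{R}^N}(Zg)\,\varphi\,dz = -\int_{\mathbb{R}^N} g\,(\mathrm{div}\,Z)\,\varphi\,dz - \int_{\mathbb{R}^N} g\,Z\varphi\,dz = -Q\int_{\mathbb{R}^N} g\,\varphi\,dz - \int_{\mathbb{R}^N} g\,Z\varphi\,dz,
\end{equation*}
where $\mathrm{div}\,Z = Q$ comes from \eqref{2-13}.

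For the left factor $f(u)$, I would first note that $Z$ is a first-order derivation, so the chain rule gives $f(u)\,Zu = Z(F(u))$. Applying the adjoint identity with $g=F(u)$ yields $\int f(u)(Zu)\varphi = -Q\int F(u)\varphi - \int F(u)Z\varphi$, which already supplies the $-Q\int F(u)\varphi$ contribution and the $\int F(u)Z\varphi$ term of the claimed identity.

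For the right-hand side I would integrate by parts once to move $\nabla_{\alpha}$ onto $(Zu)\varphi$, using $\int(\Delta_{\alpha}u)\psi = -\int \nabla_{\alpha}u\cdot\nabla_{\alpha}\psi$, valid because the weight $|x|^{2\alpha}$ multiplying $\Delta_{y}$ is independent of $y$. Expanding $\nabla_{\alpha}((Zu)\varphi) = \varphi\,\nabla_{\alpha}(Zu) + Zu\,\nabla_{\alpha}\varphi$ immediately produces the cross term $\int \nabla_{\alpha}u\cdot\nabla_{\alpha}\varphi\,Zu$ of the statement. The crucial step is $\nabla_{\alpha}(Zu)$: the commutator $[\nabla_{\alpha},Z]=\nabla_{\alpha}$ from \eqref{2-13} gives $\nabla_{\alpha}(Zu) = Z(\nabla_{\alpha}u) + \nabla_{\alpha}u$, and since $Z$ is a derivation, $\nabla_{\alpha}u\cdot Z(\nabla_{\alpha}u) = \tfrac12 Z(|\nabla_{\alpha}u|^2)$. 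Hence $\int \nabla_{\alpha}u\cdot\nabla_{\alpha}(Zu)\varphi = \tfrac12\int Z(|\nabla_{\alpha}u|^2)\varphi + \int |\nabla_{\alpha}u|^2\varphi$, and one more application of the adjoint identity with $g=|\nabla_{\alpha}u|^2$ collapses this to $-\tfrac{Q-2}{2}\int|\nabla_{\alpha}u|^2\varphi - \tfrac12\int|\nabla_{\alpha}u|^2 Z\varphi$.

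Equating the two transformed sides and rearranging to match signs gives exactly \eqref{Pohozaev identity}. The one genuine subtlety — the step I would flag as the heart of the matter — is the commutator identity $[\nabla_{\alpha},Z]=\nabla_{\alpha}$: it is the precise anisotropic analogue of the Euclidean relation behind classical Pohozaev identities, and it is responsible for the homogeneous dimension $Q$ (rather than the Euclidean dimension $N$) appearing in the coefficient $\tfrac{Q-2}{2}$. Everything else is routine bookkeeping, so long as one consistently uses $\mathrm{div}\,Z = Q$ in each adjoint computation for the dilation field.
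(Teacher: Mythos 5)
Your proposal is correct and follows essentially the same route as the paper: test the equation with $\varphi\,Zu$, handle the nonlinearity via $f(u)Zu=Z(F(u))$ and the adjoint identity with $\operatorname{div}Z=Q$, and handle the gradient term via the commutator $[\nabla_{\alpha},Z]=\nabla_{\alpha}$ together with $\nabla_{\alpha}u\cdot Z(\nabla_{\alpha}u)=\tfrac12 Z(|\nabla_{\alpha}u|^2)$. The bookkeeping matches the paper's computation in \eqref{2.15}--\eqref{2.16} term by term, so nothing further is needed.
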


\begin{proof}
  We test the equation by $\varphi Zu$, then
  \begin{equation}\label{2.15}
  \begin{aligned}
    \int_{\mathbb{R}^N} \nabla_{\alpha}u \cdot \nabla_{\alpha}(\varphi Zu )dz& =  \int_{\mathbb{R}^N} f(u)\varphi Zu dz=\int_{\mathbb{R}^N} \varphi ZF(u) dz\\&
  =-\int_{\mathbb{R}^N} F(u) Z\varphi dz- \int_{\mathbb{R}^N} Q F(u)\varphi dz.
  \end{aligned}
  \end{equation}
  On the other hand, by \eqref{2-13},
    \begin{equation}\label{2.16}
  \begin{aligned}
    \int_{\mathbb{R}^N} \nabla_{\alpha}u \cdot \nabla_{\alpha}(\varphi Zu )dz
    &=\int_{\mathbb{R}^N} (\nabla_{\alpha}u \cdot \nabla_{\alpha}\varphi  Zu + \varphi \nabla_{\alpha}u \cdot \nabla_{\alpha}Zu) dz\\&
  =\int_{\mathbb{R}^N} (\nabla_{\alpha}u \cdot \nabla_{\alpha}\varphi Zu+ \varphi \nabla_{\alpha}u \cdot Z\nabla_{\alpha}u + \varphi \nabla_{\alpha}u \cdot \nabla_{\alpha}u) dz\\&
  =\int_{\mathbb{R}^N} ( \nabla_{\alpha}u \cdot \nabla_{\alpha}\varphi Zu+\frac{1}{2} \varphi Z|\nabla_{\alpha}u|^2 + \varphi|\nabla_{\alpha}u|^2) dz\\&
  =\int_{\mathbb{R}^N} ( \nabla_{\alpha}u \cdot \nabla_{\alpha}\varphi Zu- \frac{1}{2} Z \varphi|\nabla_{\alpha}u|^2+\frac{2-\text{div~}Z}{2} \varphi |\nabla_{\alpha}u|^2) dz.
    \end{aligned}
  \end{equation}
 Combining \eqref{2.15} and \eqref{2.16}, we arrive at \eqref{Pohozaev identity}.
  \end{proof}

  $\mathbf{Notations}$: Throughout the paper, different positive constants are commonly denoted by $C$ or $C(\alpha,\beta)$ if they are related to certain parameters $\alpha$ and $\beta$, and may vary as needed. The symbol $dz$ represents the Lebesgue measure on $\mathbb{R}^{N}$, while $d\sigma$ denotes the $(N-1)$-dimensional Hausdorff measure on a surface in $\mathbb{R}^{N-1}$.

\section{Moser Iteration}
Before presenting the proofs, it is necessary to recall the following:
\begin{definition}
We say that a $C^2$ solution $u$ to \eqref{01},
\begin{enumerate}
    \item is stable if
    \begin{equation}
    \int_{\mathbb{R}^N} |\nabla_{\alpha} \varphi|^2 \,dz \geq \int_{\mathbb{R}^N}  p|u|^{p-1}\varphi^2 \,dz, \quad \forall \varphi \in C_c^{1}(\mathbb{R}^N).
    \end{equation}

    \item is stable outside a compact set $K$ if
    \begin{equation}\label{stable}
    \int_{\mathbb{R}^N} |\nabla_{\alpha} \varphi|^2 \,dz \geq \int_{\mathbb{R}^N}p|u|^{p-1}\varphi^2 \,dz, \quad \forall \varphi \in C_c^{1}(\mathbb{R}^N\setminus K).
    \end{equation}

    \item has Morse index equal to $k$ if $k$ is the maximal dimension of a subspace $W_k$ of $C_c^{1}(\mathbb{R}^N)$ such that
    \[\int_{\mathbb{R}^N} |\nabla_{\alpha} \varphi|^2 \,dz < \int_{\mathbb{R}^N} p|u|^{p-1}\varphi^2 \,dz, \quad \forall \varphi \in W_k\setminus \{0\}.\]
\end{enumerate}
\end{definition}
\begin{remark}
  Any finite Morse index solution $u$ is stable outside a compact set. 
\end{remark}
The next lemma follows from 
the Moser iteration agruments presented in \cite{Farina},
for the  sake of completeness, we restate it:
\begin{lemma}\label{prop}
  If $u$ is a $C^2$ solution to \eqref{01}, which is stable outside a compact set $K$, and suppose $p>1$. Define $s(p)=2p+2\sqrt{p(p-1)}-1$. Then
   for  $s\in [1, s(p))$ and any integer $m\geq \frac{s+p}{p-1}$, there exists a positive constant $C(s,p,m)$ such that
\begin{equation}\label{08}
   \int_{\mathbb{R}^N} |u|^{p+s} \phi^{2m} dz  \leq C(s,p,m) \int_{\mathbb{R}^N} (|\nabla_{\alpha} \phi|^2 + |\phi \Delta_{\alpha}\phi|)^{\frac{s+p}{p-1}} dz
\end{equation}
holds for all $\phi \in C_c^{\infty}(\mathbb{R}^N\setminus K)$ with $0\leq \phi \leq 1$.
\end{lemma}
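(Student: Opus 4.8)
The plan is to run the Moser-iteration scheme of Farina with $\nabla$ and $\Delta$ replaced throughout by the Grushin gradient $\nabla_{\alpha}$ and operator $\Delta_{\alpha}$. The only structural facts I need are the first-order chain rule $\nabla_{\alpha}(|u|^{s+1})=(s+1)|u|^{s-1}u\,\nabla_{\alpha}u$ and the divergence-form integration-by-parts identity $\int \nabla_{\alpha}v\cdot\nabla_{\alpha}w\,dz=-\int v\,\Delta_{\alpha}w\,dz$ for compactly supported $v,w$; both hold because $\nabla_{\alpha}$ is a weighted first-order gradient and $\Delta_{\alpha}$ its associated divergence-form operator. Since $\phi$ is supported in a compact set away from $K$ and $u\in C^2$, every integral below is over a compact set and is finite, so all manipulations are legitimate. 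I write $A:=\int_{\mathbb{R}^N}|u|^{p+s}\phi^{2m}\,dz$ for the quantity to be estimated and $G:=\int_{\mathbb{R}^N}|u|^{s-1}|\nabla_{\alpha}u|^2\phi^{2m}\,dz$ for an auxiliary gradient integral.

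First I would test \eqref{01} against the admissible function $|u|^{s-1}u\,\phi^{2m}$ (which is $C^1$ with compact support for $s\ge1$), move one derivative by parts, and use the chain rule to obtain
\begin{equation*}
sG+\frac{1}{s+1}\int_{\mathbb{R}^N}\nabla_{\alpha}(|u|^{s+1})\cdot\nabla_{\alpha}(\phi^{2m})\,dz=A;
\end{equation*}
a further integration by parts rewrites the middle term as $-\frac{1}{s+1}\int_{\mathbb{R}^N}|u|^{s+1}\Delta_{\alpha}(\phi^{2m})\,dz$, so $G$ is expressed through $A$ and an integral of $|u|^{s+1}$ against $\Delta_{\alpha}(\phi^{2m})$. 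Next I would insert $\varphi=|u|^{(s-1)/2}u\,\phi^{m}\in C_c^1(\mathbb{R}^N\setminus K)$ into the stability inequality \eqref{stable}; expanding $|\nabla_{\alpha}\varphi|^2$ and using $p-1+(s+1)=p+s$ yields
\begin{equation*}
pA\le \frac{(s+1)^2}{4}G+\int_{\mathbb{R}^N}\nabla_{\alpha}(|u|^{s+1})\cdot\phi^{m}\nabla_{\alpha}(\phi^{m})\,dz+\int_{\mathbb{R}^N}|u|^{s+1}|\nabla_{\alpha}(\phi^{m})|^2\,dz.
\end{equation*}

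Eliminating $G$ with the identity from the previous step and simplifying the cross term by parts collapses the two $\Delta_{\alpha}(\phi^{2m})$ contributions into one coefficient, leaving
\begin{equation*}
\Big(p-\frac{(s+1)^2}{4s}\Big)A\le \frac{1-s}{4s}\int_{\mathbb{R}^N}|u|^{s+1}\Delta_{\alpha}(\phi^{2m})\,dz+m^2\int_{\mathbb{R}^N}|u|^{s+1}\phi^{2m-2}|\nabla_{\alpha}\phi|^2\,dz.
\end{equation*}
The crucial observation is that $p-\frac{(s+1)^2}{4s}>0$ is equivalent to $s^2+(2-4p)s+1<0$, whose larger root is exactly $s(p)=2p+2\sqrt{p(p-1)}-1$ (the smaller being $1/s(p)<1$); hence the coefficient of $A$ is strictly positive precisely on the admissible range $s\in[1,s(p))$.

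Finally I would bound the right-hand side. Using $|\Delta_{\alpha}(\phi^{2m})|\le 2m\,\phi^{2m-1}|\Delta_{\alpha}\phi|+2m(2m-1)\phi^{2m-2}|\nabla_{\alpha}\phi|^2$, every term takes the form $|u|^{s+1}\phi^{2m-2}$ times either $\phi|\Delta_{\alpha}\phi|$ or $|\nabla_{\alpha}\phi|^2$. To each I would apply Young's inequality with conjugate exponents $\frac{p+s}{s+1}$ and $\frac{p+s}{p-1}$, splitting off $\big(|u|^{p+s}\phi^{2m}\big)^{(s+1)/(p+s)}$; the residual power of $\phi$ equals $\big(2m-2-2m\tfrac{s+1}{p+s}\big)\tfrac{p+s}{p-1}$, which is nonnegative exactly when $m\ge\frac{s+p}{p-1}$, so $0\le\phi\le1$ lets me discard it and recover $(|\nabla_{\alpha}\phi|^2+|\phi\Delta_{\alpha}\phi|)^{(s+p)/(p-1)}$. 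Choosing the Young parameter small enough to absorb the resulting multiple of $A$ into the positive left-hand coefficient then produces \eqref{08}. The main obstacle is purely the bookkeeping: keeping $p-\frac{(s+1)^2}{4s}$ positive pins down the threshold $s(p)$, while nonnegativity of the leftover power of $\phi$ pins down the lower bound on $m$. The Grushin setting only demands verifying that the divergence-form integration by parts and the homogeneity of $\nabla_{\alpha}$ behave as in the Euclidean case.
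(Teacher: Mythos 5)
Your proposal is correct and follows essentially the same route as the paper: test the equation with $|u|^{s-1}u\,\phi^{2m}$, test the stability inequality with $|u|^{(s-1)/2}u\,\phi^{m}$, combine the two so that the gradient term cancels with a positive margin precisely for $s<s(p)$ (the root computation for $4ps=(s+1)^2$ is exactly the paper's), and then use the condition $m\ge\frac{s+p}{p-1}$ to control the leftover power of $\phi$. The only cosmetic difference is that you close the argument by Young's inequality and absorption where the paper uses H\"older's inequality and divides through; both rely on the finiteness of $\int|u|^{p+s}\phi^{2m}\,dz$, which you correctly note.
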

\begin{proof}
Testing  equation \eqref{01} with $|u|^{s-1} u \varphi^2$, where $ \varphi \in C_c^{\infty}(\mathbb{R}^N\setminus K)$ and $s\geq 1$, we obtain
\begin{equation}\label{03}
\begin{aligned}
 \int_{\mathbb{R}^N} &|u|^{p+s} \varphi^2 dz
  = \int_{\mathbb{R}^N} \nabla_{\alpha} u \cdot \nabla_{\alpha} (|u|^{s-1} u \varphi^2)dz\\
  &
  =\frac{4s}{(s+1)^2} \int_{\mathbb{R}^N} |\varphi \nabla_{\alpha}(|u|^{\frac{s-1}{2}}u)|^2dz+ \frac{1}{s+1} \int_{\mathbb{R}^N}\nabla_{\alpha}(|u|^{s+1}) \cdot \nabla_{\alpha}(\varphi^2)dz
  \\&
  =\frac{4s}{(s+1)^2} \int_{\mathbb{R}^N} |\varphi \nabla_{\alpha}(|u|^{\frac{s-1}{2}}u)|^2dz- \frac{1}{s+1} \int_{\mathbb{R}^N}|u|^{s+1}\Delta_{\alpha}(\varphi^2)dz.
\end{aligned}
\end{equation}
On the other hand, selecting $|u|^{\frac{s-1}{2}}u\varphi$ as a test function in the stability inequality \eqref{stable} yields
\begin{equation}\label{04}
   \begin{aligned}
    \int_{\mathbb{R}^N}& p|u|^{p+s} \varphi^2 dz \leq
    \int_{\mathbb{R}^N} |\nabla_{\alpha} (|u|^{\frac{s-1}{2}}u\varphi)|^2 dz\\
   &
   = \int_{\mathbb{R}^N}( |\varphi \nabla_{\alpha} (|u|^{\frac{s-1}{2}}u)|^2+|u|^{s+1}|\nabla_{\alpha} \varphi|^2+\frac{1}{2} \nabla_{\alpha}(|u|^{s+1}) \cdot \nabla_{\alpha}(\varphi^2))dz\\&
   = \int_{\mathbb{R}^N}( |\varphi \nabla_{\alpha} (|u|^{\frac{s-1}{2}}u)|^2+|u|^{s+1}|\nabla_{\alpha} \varphi|^2-\frac{1}{2}|u|^{s+1}\Delta_{\alpha} (\varphi^2))dz.
   \end{aligned}
 \end{equation}
Note that $s(p)=2p+2\sqrt{p(p-1)}-1$,  it can be verified that $4ps(p)=(s(p)+1)^2$. Subtracting \eqref{04} from \eqref{03}, we observe that,
for $s\in [1, s(p))$ ,
   \begin{equation}\label{3.6}
    \int_{\mathbb{R}^N} |u|^{p+s} \varphi^2 dz \leq  C(s,p) \int_{\mathbb{R}^N} (|u|^{s+1}|\nabla_{\alpha} \varphi|^2+|u|^{s+1}|\Delta_{\alpha} (\varphi^2)|) dz.
 \end{equation}

Let $\varphi=\phi^m$ for some  integer $m\geq 1$ and $\phi \in C_c^{\infty}(\mathbb{R}^N\setminus K)$ such that $0\leq \phi \leq 1$. From \eqref{3.6}, we obtain:
\begin{equation}\label{06}
 \int_{\mathbb{R}^N} |u|^{p+s} \phi^{2m} dz\leq
C(s,p,m) \Big{(} \int_{\mathbb{R}^N}  |u|^{s+1}\phi^{2m-2}|\nabla_{\alpha} \phi|^2 dz+  \int_{\mathbb{R}^N}|u|^{s+1}\phi^{2m-2}
|\phi\Delta_{\alpha} \phi| dz \Big{)}.
\end{equation}
When $m\geq \frac{s+p}{p-1}$, it holds that $(2m-2)\cdot \frac{s+p}{s+1}\geq2m$. Thus, $\phi^{(2m-2)\cdot \frac{s+p}{s+1}}\leq \phi^{2m}$, and by H\"{o}lder inequality, we have:
\begin{equation}\label{07}
\begin{aligned}
  \int_{\mathbb{R}^N} |u|^{p+s} \phi^{2m} dz &\leq
C(s,p,m) \Big{(} \int_{\mathbb{R}^N}|u|^{p+s}\phi^{(2m-2)\cdot \frac{s+p}{s+1}} dz \Big{)}^{\frac{1+s}{p+s}}\Big{[} \int_{\mathbb{R}^N} (|\nabla_{\alpha} \phi|^2+ |\phi \Delta_{\alpha}\phi|)^{\frac{s+p}{p-1}} dz \Big{]}^{\frac{p-1}{s+p}}
\\&\leq
C(s,p,m) \Big{(} \int_{\mathbb{R}^N}|u|^{p+s} \phi^{2m} dz \Big{)}^{\frac{1+s}{p+s}}\Big{[} \int_{\mathbb{R}^N} (|\nabla_{\alpha} \phi|^2+ |\phi \Delta_{\alpha}\phi|)^{\frac{s+p}{p-1}} dz \Big{]}^{\frac{p-1}{s+p}}
.
\end{aligned}
\end{equation}
Therefore, for $s\in [1, s(p))$, we conclude that
\begin{equation}
   \int_{\mathbb{R}^N} |u|^{p+s} \phi^{2m} dz  \leq C(s,p,m) \int_{\mathbb{R}^N} (|\nabla_{\alpha} \phi|^2 + |\phi \Delta_{\alpha}\phi|)^{\frac{s+p}{p-1}} dz.
\end{equation}
\end{proof}
\begin{remark}
   If $u$ is stable, for $r>0$, let
\begin{equation}\label{phi}
  \phi(z)=\phi(\rho(z))=\begin{cases}
           1, & \mbox{if } z \in B_r \\
            0, & \mbox{if } z \in B_{2r}^c.
         \end{cases}
\end{equation}
Then we have $|\nabla_{\alpha}\phi|\leq \frac{C(Q)}{r}$ and $|\Delta_{\alpha}\phi|\leq \frac{C(Q)}{r^2}.$
Hence, by choosing $K=\emptyset$,
 from \eqref{08} and \eqref{2-12}, we must have
\begin{equation}\label{09}
\begin{aligned}
\int_{B_{r}} |u|^{p+s} dz
&\leq C(s,p) \int_{\mathbb{R}^N} (|\nabla_{\alpha} \phi|^2 + |\phi \Delta_{\alpha}\phi|)^{\frac{s+p}{p-1}} dz\\&
\leq  C(s,p,Q)\int_{B_{2r}\backslash B_{r}} r^{-\frac{2(s+p)}{p-1}} dz\\&
\leq C(s,p,Q)r^{Q-2\cdot \frac{s+p}{p-1}}.
\end{aligned}
\end{equation}
If we further assume $p \in (1, p_{\text{JL}}(Q))$,  there exists $s_0\in [1, s(p))$ such that $Q-2\cdot \frac{s_0+p}{p-1}<0$. Letting $r\to\infty$, the integral $\int_{\mathbb{R}^N}|u|^{p+s_0} dz$ equals to 0, implying $u=0$.
\end{remark}
\section{Proof of (1)  in Theorem 1.1}
\begin{proof} [Proof of (1) in Theorem \ref{thm1}]
 Without loss of generality, we assume that $u$ is stable outside $B_{1}$. For $p\in (1,p_{\text{S}}(Q))$ and $s\in [1, s(p))$, where $Q<\frac{2(1+p)}{p-1}$, we choose the cut-off function $\phi$ with $r>2$ as follows:
\begin{equation}\label{1-13}
 \phi(z)= \phi(\rho(z))=\begin{cases}
        0,  & \mbox{if } \rho(z)\in [0,1] \\
        1, & \mbox{if } \rho(z)\in [2,r]\\
        0, & \mbox{if }  \rho(z) \in [2r,\infty].
      \end{cases}
\end{equation}
Then we immediate obtain from \eqref{08} that
\begin{equation}\label{1-13}
\begin{aligned}
  \int_{B_r\setminus B_2} |u|^{s+p} dz&
  \leq C(s,p)\int_{\mathbb{R}^N} (|\nabla_{\alpha} \phi|^2 + |\phi \Delta_{\alpha}\phi|)^{\frac{s+p}{p-1}} dz\\&
  \leq C(s,p,Q) ( \int_{B_{2}\setminus B_{1}} dz + r^{-\frac{2(s+p)}{p-1}}\int_{B_{2r}\setminus B_{r}}  dz )
  \\& \leq
   C(s,p, Q)(1+r^{Q-\frac{2(s+p)}{p-1}}).
\end{aligned}
\end{equation}
Similarly, for $r$ large enough, by choosing suitable cut-off function in \eqref{08},  the following holds:
\begin{equation}\label{4.5}
\int_{B_{2r}\backslash B_r} |u|^{s+p} dz\leq C(s,p,Q)r^{Q-\frac{2(s+p)}{p-1}}.
\end{equation}
Since $u\in C^2$, it follows that $u\in L^{s+p}(\mathbb{R}^N)$.
Choosing $ \varphi(z)= \varphi(\rho(z))$ such that:
\begin{equation}\label{var}
  \varphi(z)=\begin{cases}
           1, & \mbox{if } z \in B_r \\
            0, & \mbox{if } z \in B_{2r}^c.
         \end{cases}
\end{equation}
Testing  equation \eqref{01} with $u\varphi^2$ we obtain that
\begin{equation}\label{4.7}
 \int_{\mathbb{R}^N} |u|^{p+1} \varphi^2 dz
  = \int_{\mathbb{R}^N} |\varphi \nabla_{\alpha}u|^2dz- \frac{1}{2} \int_{\mathbb{R}^N}u^{2}\Delta_{\alpha}(\varphi^2)dz.
  \end{equation}
Since
\begin{align*}
\left|\int_{\mathbb{R}^N} u^{2}\Delta_{\alpha}(\varphi^2) dz\right|
&\leq C\int_{B_{2r}\backslash B_{r}} \frac{u^2}{r^2} dz\\&
\leq Cr^{Q-2-\frac{2Q}{1+p}}\left(\int_{B_{2r}\backslash B_{r}} |u|^{1+p} dz\right)^{\frac{2}{1+p}}
\\&\leq Cr^{Q-\frac{2(p+1)}{p-1}}
 \xrightarrow{r\to +\infty}0,
\end{align*}
we conclude from \eqref{4.7} and  $|u|^{p+1}\in L^1(\mathbb{R}^N)$ that $|\nabla_{\alpha}u|^{2}\in L^1(\mathbb{R}^N)$
and
\begin{equation}\label{2-8}
 \int_{\mathbb{R}^N} |u|^{p+1} dz
 =\int_{\mathbb{R}^N}  | \nabla_{\alpha} u|^2 dz.
\end{equation}
For $\varphi$ defined in \eqref{var}, applying the Pohozaev identity \eqref{Pohozaev identity} and letting $r\to +\infty$, we obtain:
  \begin{equation}\label{2-6}
    \frac{Q-2}{2}\int_{\mathbb{R}^N} |\nabla_{\alpha} u|^2 dz=\frac{Q}{p+1}\int_{\mathbb{R}^N}|u|^{p+1}dz.
  \end{equation}
  We point out here that we have used \eqref{2-14}  to obtain
\begin{equation*}
  \begin{aligned}
  |\nabla_{\alpha} u\cdot \nabla_{\alpha} \varphi Zu|&\leq 
  |\varphi'(\rho)||\nabla_{\alpha} u\cdot\nabla_{\alpha} \rho| |Zu|\\&
  \leq \frac{C}{\rho} |\nabla_{\alpha} u| |\nabla_{\alpha} \rho| |Zu|\\&
  \leq C\frac{\rho^{2\alpha}}{|x|^{2\alpha}} |\nabla_{\alpha} u|^2|\nabla_{\alpha} \rho|^2\\&\leq C|\nabla_{\alpha} u|^2,
  \end{aligned}
\end{equation*}  
  and hence
  \[\int_{\mathbb{R}^N} |\nabla_{\alpha} u\cdot \nabla_{\alpha} \varphi Zu| \,dz
   \leq C\int_{B_{r}^c} |\nabla_{\alpha} u|^2\,dz
   \to 0\text{~as~}r\to+\infty.\]
Therefore,
\begin{equation}\label{2-9}
  0 =(\frac{Q}{p+1}-\frac{Q-2}{2})\int_{\mathbb{R}^N} |u|^{p+1} dz.
\end{equation}
In  the case  $ p\in (1, p_{\text{S}}(Q))$, we must have $\int_{\mathbb{R}^N} |u|^{p+1} dz=0$, thus $u=0$.
\end{proof}
\section{Asymptotic Analysis}
Now we have established the validity of (1) in Theorem \ref{thm1}. The following asymptotic estimates will be useful in the remaining proof:
\begin{lemma}\label{infinity}
 For $p\in [p_{\text{S}}(Q), p_{\text{JL}}(Q))$, if $u$ is a $C^2$ solution to $-\Delta_{\alpha} u=|u|^{p-1}u$ and is stable outside a compact set, then \[\lim_{\rho(z)\to+\infty} \rho(z)^{\frac{2}{p-1}}|u(z)| = 0.\]

\end{lemma}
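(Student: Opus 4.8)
The plan is to argue by a blow-down (rescaling) procedure, reducing the statement to the triviality of stable solutions via Rahal's Liouville theorem \cite{Rahal}. The relevant invariance is that if $u$ solves \eqref{01}, then so does $u^{\lambda}(z):=\lambda^{\frac{2}{p-1}}u(\delta_{\lambda}(z))$ for every $\lambda>0$, as one checks directly from the homogeneity \eqref{2.444} (the exponents match because $\tfrac{2}{p-1}+2=\tfrac{2p}{p-1}$). Since $\rho(\delta_{\lambda}(z))=\lambda\rho(z)$, and stability outside $K\subset B_{R_{0}}$ turns into stability outside $\delta_{1/\lambda}(K)\subset B_{R_{0}/\lambda}$, each $u^{\lambda}$ is stable outside a compact set that shrinks to the origin as $\lambda\to+\infty$. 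Writing a point $z$ with $\rho(z)=\lambda$ as $z=\delta_{\lambda}(\zeta)$ for $\zeta$ on the unit sphere $\{\rho=1\}$ gives the identity $\rho(z)^{\frac{2}{p-1}}|u(z)|=|u^{\lambda}(\zeta)|$, so it suffices to show that $u^{\lambda}\to 0$ uniformly on $\{\rho=1\}$ as $\lambda\to+\infty$.

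First I would establish the preliminary pointwise bound $|u(z)|\le C\rho(z)^{-\frac{2}{p-1}}$. Since $p<p_{\text{JL}}(Q)$, one may fix $s_{0}\in[1,s(p))$ with $Q-\tfrac{2(s_{0}+p)}{p-1}<0$, equivalently $\tfrac{s_{0}+p}{p-1}>\tfrac{Q}{2}$, exactly as in the Remark following Lemma \ref{prop}. Choosing annular cut-offs in \eqref{08} yields the scale-invariant energy bound $\int_{B_{2r}\setminus B_{r/2}}|u|^{s_{0}+p}\,dz\le Cr^{Q-\frac{2(s_{0}+p)}{p-1}}$ for all large $r$. Rescaling to unit scale and applying the local boundedness estimate (De Giorgi--Nash--Moser) for $-\Delta_{\alpha}$, whose potential $|u|^{p-1}$ lies in $L^{\frac{s_{0}+p}{p-1}}$ with integrability exponent strictly above $\tfrac{Q}{2}$, converts this integral bound into the claimed pointwise decay. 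In particular $|u^{\lambda}(z)|\le C\rho(z)^{-\frac{2}{p-1}}$ uniformly in $\lambda$ on every annulus bounded away from the origin.

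With these uniform bounds the family $\{u^{\lambda}\}$ is, after a bootstrap of the interior (subelliptic) estimates, bounded in $C^{2}_{\mathrm{loc}}(\mathbb{R}^{N}\setminus\{0\})$. Suppose, for contradiction, that $\limsup_{\rho(z)\to+\infty}\rho(z)^{\frac{2}{p-1}}|u(z)|=2c>0$; then there exist $\lambda_{k}\to+\infty$ and $\zeta_{k}\in\{\rho=1\}$ with $|u^{\lambda_{k}}(\zeta_{k})|\ge c$. Passing to a subsequence, $u^{\lambda_{k}}\to u^{\infty}$ in $C^{2}_{\mathrm{loc}}(\mathbb{R}^{N}\setminus\{0\})$, where $u^{\infty}$ solves \eqref{01} on $\mathbb{R}^{N}\setminus\{0\}$, is stable there (the compact sets having collapsed to the origin), satisfies $|u^{\infty}|\le C\rho^{-\frac{2}{p-1}}$, and has $|u^{\infty}(\zeta_{\infty})|\ge c>0$ at a limit point $\zeta_{\infty}$ on the unit sphere, so $u^{\infty}\not\equiv 0$. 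The contradiction would follow from $u^{\infty}\equiv 0$: since $Q>2$ (as required for $p_{\text{S}}(Q)$ to be finite), a point carries zero $2$-capacity and $|u^{\infty}|^{p-1}\in L^{1}_{\mathrm{loc}}$, so the stability inequality extends across the origin; and because $p<p_{\text{JL}}(Q)$ the homogeneous profile $\rho^{-\frac{2}{p-1}}$ violates the Grushin--Hardy/stability threshold, so stability forces the singularity to be removable and $u^{\infty}$ to be an entire $C^{2}$ stable solution, trivial by Rahal's theorem. Hence $\limsup_{\rho(z)\to+\infty}\rho(z)^{\frac{2}{p-1}}|u(z)|=0$, which is the assertion.

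The hard part is precisely the analysis at the origin of the blow-down limit. The bound $|u^{\infty}|\le C\rho^{-\frac{2}{p-1}}$ only rules out singularities worse than homogeneous, so one must genuinely exploit $p<p_{\text{JL}}(Q)$ to exclude a nontrivial singular stable solution; making this removable-singularity step rigorous, i.e.\ showing that a stable $C^{2}$ solution on a punctured ball with $O(\rho^{-\frac{2}{p-1}})$ growth is regular, is the delicate point. It is compounded by the degeneracy of $\Delta_{\alpha}$ on the characteristic set $\{x=0\}$, where the interior estimates underlying both the $C^{2}_{\mathrm{loc}}$ compactness and the pointwise bound must be drawn from the subelliptic regularity theory rather than the classical one.
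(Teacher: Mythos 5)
Your proposal takes a genuinely different route from the paper: a blow-down/compactness argument that reduces the decay estimate to a Liouville theorem for the limit profile. The paper's proof is far more direct and never forms a limit object. It observes that, by the integral estimates of Lemma \ref{prop}, $u\in L^{\frac{Q(p-1)}{2}}(\mathbb{R}^N)$ globally (the exponent $\frac{Q(p-1)}{2}$ equals $p+s_1$ with $s_1\in[1,s(p))$ precisely because $p\in[p_{\text{S}}(Q),p_{\text{JL}}(Q))$), so the tail $\int_{B_R^c}|u|^{\frac{Q(p-1)}{2}}\,dz$ can be made arbitrarily small. Since this norm is invariant under the scaling $u_r(z)=r^{\frac{2}{p-1}}u(\delta_r(z))$, the local Moser estimate of Lemma \ref{moser} (with potential $|u_r|^{p-1}$ uniformly bounded in $L^{\frac{Q+\eta}{2}}$, $\eta>0$) gives $\|u_r\|_{L^\infty(B_3\setminus B_2)}\le C\epsilon^{\frac{2}{Q(p-1)}}$ directly, which is the conclusion. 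Note that your own ``preliminary step'' already contains all the ingredients of this argument: once you have the scale-invariant annular bounds and the rescaled local boundedness estimate, you only need to replace the uniform bound $O(r^{Q-\frac{2(s_0+p)}{p-1}})$ by the observation that the annular integrals of $|u|^{\frac{Q(p-1)}{2}}$ actually tend to zero, and you are done --- the entire blow-down machinery is unnecessary.

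The reason this matters is that the blow-down route, as written, has a genuine gap exactly where you flag it. The limit $u^{\infty}$ satisfies only $|u^{\infty}|\le C\rho^{-\frac{2}{p-1}}$, so $|u^{\infty}|^{p-1}\le C\rho^{-2}$, which is \emph{borderline outside} $L^{Q/2}_{\mathrm{loc}}$ near the origin; standard local boundedness therefore does not remove the singularity, and Rahal's theorem (stated for entire $C^2$ stable solutions) cannot be invoked. To close this you would have to rerun the stability-based Moser iteration of Lemma \ref{prop} localized at the origin to upgrade the integrability of $|u^{\infty}|^{p-1}$ above $Q/2$ (using $p<p_{\text{JL}}(Q)$), after first justifying that the stability inequality and the equation extend across $\{0\}$ by a capacity argument, and after establishing enough compactness (in the presence of the characteristic set $\{x=0\}$, where only subelliptic estimates are available) to pass the equation and the stability inequality to the limit. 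None of these steps is carried out, and the phrase ``the homogeneous profile violates the Grushin--Hardy/stability threshold'' is an intuition, not a proof. As it stands the argument is a program rather than a proof; the paper's version avoids every one of these difficulties.
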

\begin{proof}
 Without loss of generality, we assume once more that $u$ is stable outside $B_{1}$.
Since $p\in [p_{\text{S}}(Q), p_{\text{JL}}(Q))$, it holds that $\frac{Q(p-1)}{2}= p+s_1$ for some $s_1\in  [1, s(p))$, and
there exist $\eta>0$ and $\bar{s}\in (1, s(p))$ such that
$(p-1)\frac{Q+\eta}{2}=p+\bar{s}$. Similar to \eqref{1-13}, we observe that for $r>2$,
  \[\int_{B_r\backslash B_2} |u|^{\frac{Q(p-1)}{2}} dz=\int_{B_r\backslash B_2} |u|^{p+s_1} dz\leq C(p, Q).\]
  Thus, $u\in L^{\frac{Q(p-1)}{2}}(\mathbb{R}^N)$. Then for any $\epsilon>0$, there exists  $R=R(\epsilon)>2$ such that
  \[  \int_{B_{R}^c} |u|^{\frac{Q(p-1)}{2}} dz\leq \epsilon.\]
  Moreover, similar to \eqref{4.5}, for any $r>R$, we have
  \[  \int_{B_{4r}\setminus B_r} |u|^{p+\bar{s}} dz \leq Cr^{Q-\frac{2(p+\bar{s})}{p-1}}\leq Cr^{-\eta}. \]
 We define the function $u_r(z):=r^{\frac{2}{p-1}}u( \delta_{r}(z))$. Therefore, we have
  $$-\Delta_{\alpha} u_r=|u_r|^{p-1}u_r,~ \text{in~} B_4\setminus B_1.$$
  Note that for  $r>R$, $$\int_{B_4\setminus B_1} |u_r|^{(p-1)\frac{Q+\eta}{2}}dz
  = r^{\eta} \int_{B_{4r}\setminus B_r} |u|^{p+\bar{s}} dz\leq C.$$
  Then by Lemma \ref{moser} and H\"{o}lder inequality,  we obtain
  \begin{equation}
  \begin{aligned}
  \|u_r\|_{L^{\infty}(B_3\setminus B_2)}&\leq C \|u_r\|_{L^2(B_4\setminus B_1)}
  \\&
  \leq  Cr^{\frac{2}{p-1}-\frac{Q}{2}}\Big{(} \int_{B_{4r}\setminus B_r} |u|^{2} dz \Big{)}^{\frac{1}{2}}\\&
  \leq C\Big{(}  \int_{B_{4r}\setminus B_r} |u|^{\frac{Q(p-1)}{2}} dz\Big{)}^{\frac{2}{Q(p-1)}}\\&
  \leq C\Big{(} \int_{B_{R}^c} |u|^{\frac{Q(p-1)}{2}} dz\Big{)}^{\frac{2}{Q(p-1)}}\\&
  \leq C\epsilon^{\frac{2}{Q(p-1)}}.
  \end{aligned}
  \end{equation}
  Taking $r\to+\infty$ and $\epsilon \to 0$, we conclude the proof.
\end{proof}

The following lemma applies a standard Moser iteration, and we provide a sketch of the proof.
\begin{lemma}\label{moser}
If $v$ is a  $C^2$ solution to $-\Delta_{\alpha}v=c(z)v$ in $B_4\setminus B_1$, and $c\in L^{q}(B_4\setminus B_1)$ with $q>\frac{Q}{2}$, then for some positive constant $C=C(q,N,\|c\|_{ L^{q}(B_4\setminus B_1)})$, we have
\begin{equation}
  \|v\|_{L^{\infty}(B_3\setminus B_2)}\leq C \|v\|_{L^2(B_4\setminus B_1)}.
\end{equation}
\end{lemma}
\begin{proof}
Testing the equation with $|v|^{s-1} v \varphi^2$, where $ \varphi \in C_c^{\infty}(B_4\setminus B_1)$ is a cut-off function and $s\geq 1$, we obtain
by Young's inequality:
\begin{equation}
\begin{aligned}
\int_{\mathbb{R}^N} | \nabla_{\alpha}(|v|^{\frac{s-1}{2}}v \varphi)|^2dz  \leq C s (\int_{\mathbb{R}^N} c|v|^{s+1} \varphi^2 dz
 + \int_{\mathbb{R}^N} |v|^{s+1}|  \nabla_{\alpha} \varphi|^2dz ).
\end{aligned}
\end{equation}
Denoting $w=|v|^{\frac{s-1}{2}}v$,  by Sobolev inequality and $c\in L^{q}(B_4\setminus B_1)$, we get
\begin{equation}
\begin{aligned}
 \|w \varphi\|_{L^{\frac{2Q}{Q-2}}(\mathbb{R}^N)}^2&  \leq
   C\int_{\mathbb{R}^N} | \nabla_{\alpha}(w \varphi)|^2dz  \\&\leq Cs (\int_{\mathbb{R}^N} cw^2 \varphi^2 dz
 + \int_{\mathbb{R}^N} w^2|  \nabla_{\alpha} \varphi|^2dz )\\&
 \leq C s (\|w \varphi\|_{L^{\frac{2q}{q-1}}(\mathbb{R}^N)}^2+  \int_{\mathbb{R}^N} w^2|  \nabla_{\alpha} \varphi|^2dz)
 . 
\end{aligned}
\end{equation}
By interpolation  inequality with $\frac{2Q}{Q-2}>\frac{2q}{q-1}>2$ if $q>\frac{Q}{2}$, we have
\[ \|w \varphi\|_{L^{\frac{2q}{q-1}}(\mathbb{R}^N)}^2 \leq  \epsilon   \|w \varphi\|_{L^{\frac{2Q}{Q-2}}(\mathbb{R}^N)}^2 + C\epsilon^{-\frac{Q}{2q-Q}}  \|w \varphi\|_{L^{2}(\mathbb{R}^N)}^2. \]
Therefore, 
\begin{equation}\label{w}
  \|w \varphi\|_{L^{\frac{2Q}{Q-2}}(\mathbb{R}^N)}^2 \leq Cs^{\frac{2q}{2q-Q}} \int_{\mathbb{R}^N}  (\varphi^2+|  \nabla_{\alpha} \varphi|^2) w^2  dz.
\end{equation}
Denote $S_{j}:=B_{4-\sum_{k=1}^{j}\frac{1}{2^k}}\setminus B_{1+\sum_{k=1}^{j}\frac{1}{2^k}}$ with $S_0:=B_4\setminus B_1$, and let
 $\varphi_{j}$ be a cut-off function supported in $S_j$   and equals to 1 in $S_{j+1}$. Set $\gamma_{j}:=2(\frac{Q}{Q-2})^j$.  
Note that $|  \nabla_{\alpha} \varphi_j|\leq C2^j$.  By choosing $s=\gamma_{j}-1$ and $\varphi=\varphi_{j}$  in \eqref{w}, we obtain:
\[ \|v\|_{L^{\gamma_{j+1}}(S_{j+1})}    \leq C^{\frac{j}{\gamma_{j}}} \|v\|_{L^{\gamma_{j}}(S_{j})}, ~~\forall j\geq 0 .\]
Thus,
\begin{equation}
\begin{aligned}
\|v\|_{L^{\infty}(B_3\setminus B_2)}\leq C^{\sum_{j=0}^{+\infty} \frac{j}{\gamma_{j}} } \|v\|_{L^2(B_4\setminus B_1)}
\leq C\|v\|_{L^2(B_4\setminus B_1)}.
\end{aligned}
\end{equation}
\end{proof}
\section{Monotonicity Formula}
If $u$ is  a $C^2$  solution  to $-\Delta_{\alpha} u=|u|^{p-1}u$,  for $r>0$, we define the energy functional:
$$E(r, u)=r^{\frac{2(p+1)}{p-1}-Q}\Big{(}\frac{1}{2}\int_{B_r}|\nabla_{\alpha} u|^2 dz-\frac{1}{p+1}\int_{B_r} |u|^{p+1} dz\Big{)}.$$
Recall $u_{r}(x,y)=r^{\frac{2}{p-1}}
u(r x, r^{1+\alpha}y)$, and  one can verify that $-\Delta_{\alpha} u_{r}=|u_{r}|^{p-1}u_{r}$ and $E(r, u)=E(1, u_{r})$.
Moreover, since the unit outer normal vector $\nu$ on the boundary $\partial B_1$ is given by $\frac{\nabla \rho(z)}{|\nabla \rho(z)|}$,  integrating by parts and utilizing \eqref{2-14}, we obtain the following equality:
\begin{equation}
  \begin{aligned}
  \frac{dE(r, u)}{dr}&=\frac{dE(1, u_r)}{dr}\\
  &= \int_{B_1}  \nabla_{\alpha} u_r \cdot \nabla_{\alpha}  \frac{du_r}{dr} dz+\int_{B_1}\Delta_{\alpha}u_r  \frac{du_r}{dr}  dz\\&
  =\int_{\partial B_1}  \frac{du_r}{dr} \frac{\nabla_{\alpha}  \rho}{|\nabla \rho|} \cdot \nabla_{\alpha} u_r d\sigma
  =\int_{\partial B_1} \frac{|x|^{2\alpha}}{|\nabla \rho|} \frac{du_r}{dr} Zu_{r} d\sigma.
  \end{aligned}
\end{equation}
Note that
\[ \frac{du_r}{dr} =\frac{1}{r}(\frac{2}{p-1}u_r+Zu_r ).\]
Thus,
\begin{equation}
  \begin{aligned}
   \frac{dE(r, u)}{dr}=\int_{\partial B_1} \frac{r|x|^{2\alpha}}{|\nabla \rho|} (\frac{du_r}{dr})^2 d\sigma-\frac{d}{dr}\int_{\partial B_1} \frac{|x|^{2\alpha}u_r^2}{(p-1)|\nabla \rho|} d\sigma.
   \end{aligned}
\end{equation}
In particular, we conclude that:
\begin{theorem}[Monotonicity formula]
For $$I(r,u):=E(r, u)+\int_{\partial B_1} \frac{|x|^{2\alpha}u_r^2}{(p-1)|\nabla \rho|} d\sigma,$$
 we have
\begin{equation}\label{monotonicity formula}
\frac{d I(r,u)}{dr}=\int_{\partial B_1} \frac{|x|^{2\alpha}}{r|\nabla \rho|} (\frac{2}{p-1}u_r+Zu_r )^2 d\sigma\geq 0.
\end{equation}
\end{theorem}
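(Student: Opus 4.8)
The plan is to deduce the formula directly from the expression for $\frac{dE(r,u)}{dr}$ already obtained in the paragraph preceding the statement, so that essentially no new computation is required. The whole point of adjoining the boundary term $\int_{\partial B_1}\frac{|x|^{2\alpha}u_r^2}{(p-1)|\nabla\rho|}\,d\sigma$ in the definition of $I(r,u)$ is precisely to absorb the total-derivative term that appears with a minus sign in $\frac{dE(r,u)}{dr}$.

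First I would differentiate the definition of $I$ in $r$, writing
\[
\frac{dI(r,u)}{dr}=\frac{dE(r,u)}{dr}+\frac{d}{dr}\int_{\partial B_1}\frac{|x|^{2\alpha}u_r^2}{(p-1)|\nabla\rho|}\,d\sigma.
\]
Then I would substitute the identity
\[
\frac{dE(r,u)}{dr}=\int_{\partial B_1}\frac{r|x|^{2\alpha}}{|\nabla\rho|}\Big(\frac{du_r}{dr}\Big)^2 d\sigma-\frac{d}{dr}\int_{\partial B_1}\frac{|x|^{2\alpha}u_r^2}{(p-1)|\nabla\rho|}\,d\sigma
\]
established above. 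The two copies of $\frac{d}{dr}\int_{\partial B_1}\frac{|x|^{2\alpha}u_r^2}{(p-1)|\nabla\rho|}\,d\sigma$ cancel, leaving $\frac{dI(r,u)}{dr}=\int_{\partial B_1}\frac{r|x|^{2\alpha}}{|\nabla\rho|}\big(\frac{du_r}{dr}\big)^2 d\sigma$. Finally, inserting the relation $\frac{du_r}{dr}=\frac{1}{r}\big(\frac{2}{p-1}u_r+Zu_r\big)$ pulls out a factor $r^{-2}$ and produces exactly $\int_{\partial B_1}\frac{|x|^{2\alpha}}{r|\nabla\rho|}\big(\frac{2}{p-1}u_r+Zu_r\big)^2 d\sigma$. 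Nonnegativity is then immediate, since on $\partial B_1$ the weight $\frac{|x|^{2\alpha}}{r|\nabla\rho|}$ is nonnegative and the remaining factor is a perfect square.

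The substantive work is hidden in the derivation of $\frac{dE(r,u)}{dr}$ that precedes the statement, not in the statement itself; given that derivation, the remaining steps are purely algebraic. The point that would need the most care in a fully rigorous write-up is the legitimacy of differentiating the boundary integrals under the integral sign and of the integration by parts on $B_1$ used to produce the normal-derivative boundary term — both of which rely on $u\in C^2$ together with the identity $\nabla_\alpha\rho\cdot\nabla_\alpha u_r=\frac{|x|^{2\alpha}Zu_r}{\rho^{1+2\alpha}}$ from \eqref{2-14} evaluated on $\partial B_1$, where $\rho=1$. Since these regularity facts are available, I expect no genuine obstacle beyond bookkeeping, and the monotonicity formula follows at once.
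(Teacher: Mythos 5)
Your proposal is correct and follows exactly the paper's own route: the theorem is stated there as an immediate consequence of the preceding computation of $\frac{dE(r,u)}{dr}$, and your cancellation of the total-derivative boundary term together with the substitution $\frac{du_r}{dr}=\frac{1}{r}\bigl(\frac{2}{p-1}u_r+Zu_r\bigr)$ is precisely how the paper obtains \eqref{monotonicity formula}. Nothing further is needed.
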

\section{Proof of (2) in Theorem 1.1}

\begin{proof}[Proof of (2) in Theorem \ref{thm1}]
Let $u$ be a $C^2$ solution to $-\Delta_{\alpha} u=|u|^{p-1}u$ that is stable outside a compact set.
By Lemma \ref{infinity},
  \[\lim_{\rho(z)\to+\infty} \rho(z)^{\frac{2}{p-1}}|u(z)| = 0.\]
  As a consequence, we have $\lim\limits_{r\to+\infty}\sup\limits_{\rho(z)=1}|u_r(z)|=0$, therefore,
  \[\lim\limits_{r\to+\infty}\int_{\partial B_1} \frac{|x|^{2\alpha}u_r^2}{(p-1)|\nabla \rho|} d\sigma=0. \]
  And for any $\epsilon>0$, there exists $R>0$ such that for $z \in B_R^c$, we have $$\rho(z)^{\frac{2}{p-1}}|u(z)|\leq \epsilon.$$ Since $p\in (p_{\text{S}}(Q), p_{\text{JL}}(Q))$, then $\frac{2(p+1)}{p-1}-Q<0$ and
\begin{equation}\label{ppp}
 \begin{aligned}
r^{\frac{2(p+1)}{p-1}-Q} \int_{B_r} |u|^{p+1} dz
 & \leq  Cr^{\frac{2(p+1)}{p-1}-Q} (\int_{B_R}|u|^{p+1} dz+ \epsilon r^{Q-2\frac{(p+1)}{p-1}})\\&
  \leq   C(r^{\frac{2(p+1)}{p-1}-Q}\int_{B_R}|u|^{p+1} dz+ \epsilon).
\end{aligned}
\end{equation}
Thus, $$\lim\limits_{r\to+\infty} r^{\frac{2(p+1)}{p-1}-Q} \int_{B_r} |u|^{p+1} dz=0.$$

On the other hand, since $ -\Delta_{\alpha} u_r=|u_r|^{p-1}u_r$,  we test this equation with $\varphi^2 u_r$, where \begin{equation*}
  \varphi(z)=\begin{cases}
           1, & \mbox{if } z \in B_1 \\
            0, & \mbox{if } z \in B_{2}^c.
         \end{cases}
\end{equation*}
We obtain 
\begin{equation*}
  \begin{aligned}
  \int_{B_1}|\nabla_{\alpha} u_r|^2 dz&\leq  \int_{\mathbb{R}^N}  \varphi^2|\nabla_{\alpha} u_r|^2 dz\\&
  \leq C \int_{\mathbb{R}^N} ( |\nabla_{\alpha} \varphi|^2|u_r|^2 +  \varphi^2 |u_r|^{p+1})dz\\&
  \leq C(\int_{B_2} |u_r|^{p+1} dz)^{\frac{2}{p+1}}  + C\int_{B_2} |u_r|^{p+1} dz.
  \end{aligned}
\end{equation*}
Similar to \eqref{ppp}, we have $\lim\limits_{r\to+\infty}\int_{B_2} |u_r|^{p+1} dz=0$, hence
\[ \lim\limits_{r\to+\infty} r^{\frac{2(p+1)}{p-1}-Q}\int_{B_r}|\nabla_{\alpha} u|^2 dz =\lim\limits_{r\to+\infty} \int_{B_1}|\nabla_{\alpha} u_r|^2 dz  =0. \]
Therefore,
\[ \lim\limits_{r\to+\infty} I(r, u)=0.\]
Since $u$ is $C^2$, we conclude that $\lim\limits_{r\to0} I(r, u)=0$. Due to the monotonicity of $I(r,u)$, we obtain that $$I(r, u)\equiv 0 ~\mbox{  and   }~ \frac{2}{p-1}u+Zu=0,$$ hence $u$ is a homogeneous solution with degree $-\frac{2}{p-1}$, i.e. $$u(\delta_{r}(z))=r^{-\frac{2}{p-1}}u(z),\forall z\in\mathbb{R}^N.$$ Therefore, the continuity of $u$ yields that $u= 0$.
\end{proof}

\section*{Acknowledgements}
Hua Chen is supported by National Natural Science Foundation of China (Grant Nos. 12131017, 12221001) and National Key R\&D Program of China (no. 2022YFA1005602). 

\section*{Declarations}
On behalf of all authors, the corresponding author states that there is no conflict of interest. Our manuscript has no associated data.

   \section{References}


\begin{thebibliography}{100}
\bibitem{B1967}
\href{https://doi.org/10.24033/bsmf.1647}{M. S. Baouendi, Sur une classe d'op\'{e}rateurs elliptiques d\'{e}g\'{e}n\'{e}r\'{e}s, Bull. de la Soc. Math. de France, \textbf{95} (1967), 45-87.}

\bibitem{clz}
\href{https://doi.org/10.1007/s00526-024-02708-6}
{H. Chen, X. Liao, M. Zhang, Dirichlet problem for a class of nonlinear degenerate elliptic operators with critical growth and logarithmic perturbation, Calc. Var. Partial Differential Equations,  \textbf{63} (5) (2024), 1-36.}

\bibitem{Dupaigne}
\href{https://community.ams.org/journals/tran/2017-369-09/S0002-9947-2017-06872-0/S0002-9947-2017-06872-0.pdf}
{J. D\'{a}vila, L. Dupaigne,  J. C. Wei,  On the fractional Lane-Emden equation, Trans. Amer. Math. Soc.,  \textbf{369} (9) (2017),  6087-6104.}

\bibitem{Wang}
\href{https://doi.org/10.1016/j.aim.2014.02.034}
{J. D\'{a}vila, L. Dupaigne, K. L. Wang, J. C. Wei, A monotonicity formula and a Liouville-type theorem for a fourth order supercritical problem, Adv. Math., \textbf{258} (2014), 240-285.}

\bibitem{DSWZ2022}
\href{https://doi.org/10.1016/j.jfa.2021.109294}{
J. Dou, L. Sun, L. Wang, et al, Divergent operator with degeneracy and related sharp inequalities, J. Funct. Anal., \textbf{282} (2) (2022), 109294.}

\bibitem{Farina}
\href{https://doi.org/10.1016/j.matpur.2007.03.001}
{A. Farina, On the classification of solutions of the Lane–Emden equation on unbounded domains of $\mathbb{R}^N$, J. Math. Pures Appl., \textbf{87} (5) (2007), 537-561.}

\bibitem{Fazly}
\href{https://doi.org/10.1353/ajm.2017.0011}
{M. Fazly, J. C. Wei, On finite Morse index solutions of higher order fractional Lane-Emden equations, Amer. J. Math.,
\textbf{139} (2) (2017), 433-460.}

\bibitem{Garofala1993}
\href{https://core.ac.uk/download/pdf/82671826.pdf}
{N. Garofalo, Unique continuation for a class of elliptic operators which degenerate on a manifold of arbitrary codimension, J. Differential Equations, \textbf{104} (1) (1993), 117-146.}



\bibitem{G1970}
\href{https://doi.org/10.1070/SM1970v012n03ABEH000931}{
V. V. Grushin, On a class of hypoelliptic operators. Math. USSR-Sb., \textbf{12} (1970),
458-476.}

\bibitem{Jerison1989}
\href{https://projecteuclid.org/journals/journal-of-differential-geometry/volume-29/issue-2/Intrinsic-CR-normal-coordinates-and-the-CR-Yamabe/10.4310/jdg/1214442877.pdf}
{D. Jerison, J. M. Lee, Intrinsic CR normal coordinates and the CR Yamabe problem, J. Differential Geom., \textbf{29} (2) (1989), 303-343.}


\bibitem{Jerison1988}
\href{https://community.ams.org/journals/jams/1988-01-01/S0894-0347-1988-0924699-9/S0894-0347-1988-0924699-9.pdf}
{D. Jerison, J. M. Lee,  Extremals for the Sobolev inequality on the Heisenberg group and the CR Yamabe problem, J. Amer. Math. Soc., \textbf{1} (1) (1988), 1-13.}

\bibitem{Levin}
\href{https://www.researchgate.net/profile/Daniel-Levin-3/publication/225365521_The_Rozenblum-Lieb-Cwikel_inequality_for_markov_generators/links/5b6bf07692851ca650531739/The-Rozenblum-Lieb-Cwikel-inequality-for-markov-generators.pdf}
{D. Levin,  M. Solomyak, The rozenblum-lieb-cwikel inequality for markov generators, J. Anal. Math., \textbf{71} (1) (1997),  173-193.}

\bibitem{Luo}
\href{https://doi.org/10.1093/imrn/rnab212}
{S. P. Luo, J. C. Wei, W. M. Zou, Monotonicity Formula and Classification of Stable Solutions to Polyharmonic Lane–Emden Equations, Int. Math. Res. Not. IMRN,  \textbf{2022} (21) (2022),  16902-16953.}


\bibitem{Rahal}
\href{https://doi.org/10.1007/s00030-018-0512-z}
{B. Rahal, Liouville-type theorems with finite Morse index for semilinear  $\triangle_\lambda$-Laplace operators, NoDEA Nonlinear Differential Equations Appl., \textbf{15} (2018),  1-19.}

\bibitem{wuke}
\href{https://doi.org/10.1515/ans-2023-0084}
{J. C. Wei, K. Wu, On singular solutions of Lane-Emden equation on the Heisenberg group, Advanced Nonlinear Studies, \textbf{23} (1) (2023).}



\end{thebibliography}
\end{document}